\DeclareMathOperator{\del}{del}
\DeclareMathOperator{\link}{link}
\def\Fc{{\mathcal F}}
\def\Ac{{\mathcal A}}
\def\Fc{{\mathcal F}}
\def\opn#1#2{\def#1{\operatorname{#2}}} 
\opn\chara{char} \opn\length{\ell} \opn\pd{pd} \opn\rk{rk}
\opn\projdim{proj-dim}
\opn\injdim{inj\,dim} \opn\rank{rank}
\opn\depth{depth} \opn\grade{grade} \opn\height{height}
\opn\embdim{emb\,dim} \opn\codim{codim}
\opn\Cl{Cl}
\opn\Tr{Tr} \opn\bigrank{big\,rank}
\opn\superheight{superheight}\opn\lcm{lcm}
\opn\trdeg{tr\,deg}
\opn\rdeg{rdeg}
	\opn\reg{reg} \opn\lreg{lreg} \opn\ini{in} \opn\lpd{lpd}
	\opn\size{size} \opn\sdepth{sdepth}
	\opn\link{link}\opn\fdepth{fdepth}\opn\lex{lex}
	\opn\tr{tr}
	\opn\type{type}
	\opn\gap{gap}
	\opn\arithdeg{arith-deg}
	\opn\revlex{revlex}
	\opn\div{div} \opn\Div{Div} \opn\cl{cl} \opn\Cl{Cl}
	\opn\Spec{Spec} \opn\Supp{Supp} \opn\supp{supp} \opn\Sing{Sing}
	\opn\Ass{Ass} \opn\Min{Min}\opn\Mon{Mon}
	\opn\Ann{Ann} \opn\Rad{Rad} \opn\Soc{Soc}
	\opn\Im{Im} \opn\Ker{Ker} \opn\Coker{Coker} \opn\Am{Am}
	\opn\Hom{Hom} \opn\Tor{Tor} \opn\Ext{Ext} \opn\End{End}
	\opn\Aut{Aut} \opn\id{id}
	\opn\nat{nat}
	\opn\pff{pf}
	\opn\Pf{Pf} \opn\GL{GL} \opn\SL{SL} \opn\mod{mod} \opn\ord{ord}
	\opn\Gin{Gin} \opn\Hilb{Hilb}\opn\sort{sort}
	\opn\PF{PF}\opn\Ap{Ap}
	\opn\mult{mult}
	\opn\bight{bight}
	\opn\div{div}
	\opn\Div{Div}
	\opn\aff{aff}
	\opn\relint{relint} \opn\st{st}
	\opn\lk{lk} \opn\cn{cn} \opn\core{core} \opn\vol{vol}  \opn\inp{inp} \opn\nilpot{nilpot}
	\opn\link{link} \opn\star{star}\opn\lex{lex}\opn\set{set}
	\opn\width{wd}
	\opn\Fr{F}
	\opn\QF{QF}
	\opn\G{G}
	\opn\type{type}\opn\res{res}
	\opn\conv{conv}
	\opn\Int{Int}
	\opn\Deg{Deg}
	\opn\Sym{Sym}
	\opn\Con{Con}
	\opn\gr{gr}
	\def\pot#1#2{#1[\kern-0.28ex[#2]\kern-0.28ex]}
	\opn\dirlim{\underrightarrow{\lim}}
	\opn\inivlim{\underleftarrow{\lim}}
	\let\iso=\cong
	\def\Implies{\ifmmode\Longrightarrow \else
		\unskip${}\Longrightarrow{}$\ignorespaces\fi}
	\def\implies{\ifmmode\Rightarrow \else
		\unskip${}\Rightarrow{}$\ignorespaces\fi}
	\def\iff{\ifmmode\Longleftrightarrow \else
		\unskip${}\Longleftrightarrow{}$\ignorespaces\fi}
	\newtheorem{Theorem}{Theorem}[section]
	\newtheorem{Lemma}[Theorem]{Lemma}
	\newtheorem{Corollary}[Theorem]{Corollary}
	\theoremstyle{definition}
	\newtheorem{Example}[Theorem]{Example}
	\let\epsilon\varepsilon
	\let\kappa=\varkappa
	\opn\dis{dis}
	\def\pnt{{\raise0.5mm\hbox{\large\bf.}}}
	\opn\Lex{Lex}
\begin{document}

\title{Resolution and Betti numbers of vertex cover ideals}
\author[T\`ai Huy H\`a]{T\`ai Huy H\`a}
\address{Mathematics Department, Tulane University, 6823 St. Charles Avenue, New Orleans, LA 70118, USA}
\email{tha@tulane.edu}
\author[Takayuki Hibi]{Takayuki Hibi}
\address{Department of Pure and Applied Mathematics, Graduate School of Information Science and Technology, Osaka University, Suita, Osaka 565-0871, Japan}
\email{hibi@math.sci.osaka-u.ac.jp}
\dedicatory{}
\keywords{vertex cover ideal, Scarf resolution, graded Betti number}
\subjclass[2020]{Primary 13D02; Secondary 13H10}
\thanks{The first author is partially supported by a Simons Foundation grant. This research was done while the second author was visiting Mathematics Department of Tulane University in the spring semester of 2024.}
\begin{abstract}
The vertex cover ideal $J(G)$ of a finite graph $G$ is studied.  We characterize when a Cohen--Macaulay vertex cover ideal $J(G)$ has a Scarf minimal free resolution. Furthermore, by using both combinatorial and topological techniques, the graded Betti number $\beta_{i,i+j}(J(G))$, where $i$ and $j$ are the projective dimension and the regularity of $J(G)$, is computed, when $G$ is either a path or a cycle.    
\end{abstract}	
\maketitle
\thispagestyle{empty}

\section*{Introduction}
The role of combinatorics is distinguished in the current trends of commutative algebra.  Particularly, the combinatorics of finite graphs has created fascinating research topics in this area. The present paper continues along these research directions and investigates algebraic properties and invariant of monomial ideals associated to finite graphs.
 
 Let $G$ be a finite \emph{simple} graph on $[n]=\{1,\ldots,n\}$, with no loops, no multiple edges and no isolated vertices, and let $E(G)$ denote the set of edges of $G$.  Let $S=K[x_1,\ldots, x_n]$ be the polynomial ring in $n$ variables over a field $K$.  The \emph{edge ideal} of $G$ is the ideal $I(G)$ of $S$ generated by the monomials $x_ix_j$ with $\{i,j\} \in E(G)$, and the \emph{vertex cover ideal} of $G$ is the ideal $J(G)$ of $S$ generated by the squarefree monomials $x_{i_1}x_{i_2} \cdots x_{i_s}$, with $1 \leq i_1 < i_2 < \cdots < i_s \leq n$, for which $\{i_1, i_2, \ldots, i_s\}$ is the vertex cover of $G$. 
 
 Homological aspects of edge ideals and vertex cover ideals, together with their powers, have been much studied.  Especially, forces are being poured into the challenging conjectures that $(I(G))^q$ have linear resolution, for $q \gg 0$, if $G$ is gap-free (cf. \cite{BBH_Survey,NP}), and that $(J(G))^q$ are componentwise linear, for $q \geq 1$, if $G$ is chordal (cf. \cite{HVT_Survey, HHO}).  
 In this paper, we are interested in Scarf complexes/resolutions and graded Betti numbers of vertex cover ideals. 
 
Scarf complexes have been investigated for monomial ideals in general \cite{BPS}, and for edge ideals of graphs in particular \cite{FHHM}. On the other hand, it follows from \cite{ER, F} that $J(G)$ is Cohen--Macaulay, i.e., $S/J(G)$ is Cohen--Macaulay, if and only if the complementary graph $G^c$ of $G$ is chordal.  In Theorem \ref{scarf}, we completely characterize when a Cohen-Macaulay vertex cover ideal $J(G)$ admits a Scarf minimal free resolution. We prove that this is the case if and only if one (or equivalently, all) leaf order of the clique complex of $G^c$ is \emph{sensitive}.

Restricting to the class of paths and cycles, we exhibit an interesting phenomenon at the highest graded Betti numbers of their vertex cover ideals. Specifically, in Theorems \ref{path} and \ref{cycle}, when $G$ is a path or a cycle, the graded Betti number $\beta_{i,i+j}(J(G))$, where $i$ and $j$ are the projective dimension and the regularity of $J(G)$, is computed. Particularly, our results show that for a path and a cycle, the regularity of its vertex cover ideal is attainable at the last syzygy module. We will use both combinatorial and topological techniques in the computation of these graded Betti numbers.

\section{Fundamental materials}
Let $\Delta$ be a simplicial complex on $[n] = \{1,\ldots,n\}$.  Thus $\Delta$ is a collection of subsets of $[n]$ with the property that (i) $\{i\} \in \Delta$ for each $i \in [n]$ and (ii) if $F \in \Delta$ and if $G \subset F$, then $G \in \Delta$.  Each $F \in \Delta$ is called a {\em face} of $\Delta$.  A {\em facet} is a maximal face of $\Delta$.  The {\em dimension} of $\Delta$ is $\dim \Delta = d - 1$, where $d$ the maximal cardinality of facets of $\Delta$.  Let $f(\Delta) = (f_0, f_1, \ldots, f_{d-1})$ be the {\em $f$-vector} of $\Delta$, where $f_i$ is the number of faces $F \in \Delta$ with $|F| = i + 1$, and $h(\Delta) = (h_0, h_1, \ldots, h_{d})$ the {\em $h$-vector} of $\Delta$ which is defined by the formula 
\[
\sum_{i=0}^{d} f_{i-1} (x-1)^{d-i} = \sum_{i=0}^{d} h_{i}x^{d-i}.
\]
with $f_{-1} = 1$.  In particular
\begin{eqnarray}
\label{fh}
h_d = (-1)^d \sum_{i=0}^{d} (-1)^i f_{i-1},
\end{eqnarray}
and
\begin{eqnarray}
\label{fhfh}
h_{d-1} = f_{d-2} - 2f_{d-3}  + 3f_{d-4} - \cdots + (-1)^{d-1} d \cdot f_{-1}.
\end{eqnarray}

A facet $F$ of a simplicial complex $\Delta$ is said to be a {\em leaf} of $\Delta$ if there is a facet $G$ of $\Delta$ with $G \neq F$, called a {\em branch} of $F$, for which $H \cap F \subset G \cap F$ for each facet $H$ of $\Delta$ with $H \neq F$.  A {\em quasi-forest} is a simplicial complex $\Delta$ for which there exists a labelling $F_1, \ldots, F_q$ of the facets of $\Delta$, called a {\em leaf order} of $\Delta$, with the property that $F_i$ is a leaf of the subcomplex $\langle F_1, \ldots, F_{i} \rangle$ of $\Delta$ for $1 < i \leq q$, where 
\[
\displaystyle\langle F_1, \ldots, F_{i} \rangle = \bigcup_{j=1}^{i}\{F \in \Delta : F \subset F_j\}.
\]

Let $G$ be a finite graph on $[n]$ with no loop, no multiple edge and no isolated vertex.  Let $E(G)$ denote the set of edges of $G$.  A {\em cycle} $C$ of length $\ell \geq 3$ of $G$ is a subgraph of $G$ with 
\[
E(C) = \{ \{i_1, i_2\}, \{i_2, i_3\}, \ldots, \{i_{\ell-1}, i_\ell\}, \{i_\ell, i_1\} \}, 
\]
where $i_j \neq i_{j'}$ if $j \neq j'$.  A {\em chord} of $C$ is an edge $\{i_k, i_{k'}\}$ with $1 \leq k < k' - 1 < \ell$.  A {\em chordal graph} is a finite graph $G$ for which every cycle of length $>3$ of $G$ has a chord.  A {\em clique} of $G$ is a subset $W \subset [n]$ for which $\{i,j\} \in E(G)$ if $i, j \in W$ with $i \neq j$.  The {\em clique complex} of a finite graph $G$ is a simplicial complex $\Delta(G)$ consisting of all cliques of $G$.  The classical theorem by Dirac \cite[Theorem 9.2.12]{HH} guarantees that $G$ is chordal if and only if $\Delta(G)$ is a quasi-forest. 

The {\em Alexander dual} of a simplicial complex $\Delta$ on $[n]$ is the simplicial complex $\Delta^\vee$ on $[n]$ whose faces are those $[n] \setminus F$ with $F \subset [n]$ and $F \not\in \Delta$.  Let $S = K[x_1, \ldots, x_n]$ deonte the polynomial ring in $n$ variables over a field $K$.  If $F = \{i_1, \ldots, i_t\} \subset [n]$, then $x_F$ stands for the squarefree monomial $x_{i_1} \cdots x_{i_t}$ of $S$.  The {\em Stanley--Reisner ideal} of a simplicial complex $\Delta$ on $[n]$ is the ideal $I_\Delta$ of $S$ generated by those squarefree monomials $x_F$ with $F \subset [n]$ and $F \not\in \Delta$.  In other words, $I_\Delta$ is generated by those $x_F$ with $F \subset [n]$ for which $[n] \setminus F$ is a facet of $\Delta^\vee$.

A {\em vertex cover} of $G$ is a subset $W \subset [n]$ for which $e \cap W \neq \emptyset$ for all $e \in E(G)$.  A {\em minimal vertex cover} of $G$ is a vertex cover of $G$ none of whose proper subsets is a vertex cover of $G$.  The {\em edge ideal} of $G$ is the ideal $I(G)$ of $S = K[x_1, \ldots, x_n]$ generated by those squarefree quadratic monomials $x_ix_j$ with $\{i,j\} \in E(G)$.  The {\em vertex cover ideal} of $G$ is the ideal $J(G)$ of $S$ generated by those squarefree monomials $x_W$ for which $W$ is a minimal vertex cover of $G$.  The {\em complementary graph} of $G$ is the finite graph $G^c$ on $[n]$ whose edges are those $\{i,j\}$ with $i \neq j$ and $\{i,j\} \not\in E(G)$.  It follows that $I(G)$ is the Stanley--Reisner ideal of the clique complex $\Delta(G^c)$ of $G^c$.  Since $W \subset [n]$ is a vertex cover of $G$ if and only if $[n] \setminus W$ is a clique of $G^c$, it follows that $J(G)$ is generated by those squarefree monomials $x_W$ for which $[n] \setminus W$ is a facet of $\Delta(G^c)$.  In other words, $J(G)$ is the Stanley--Reisner ideal of the Alexander dual $\Delta(G^c)^\vee$ of $\Delta(G^c)$.

\section{Cohen--Macaulay vertex cover ideals}
Let $G$ be a finite graph on $[n]$ and $S = K[x_1, \ldots, x_n]$ the polynomial ring in $n$ variables over a field $K$.  Fr\"oberg's theorem (cf. \cite{F} and \cite[Theorem 9.2.3]{HH}) says that the edge ideal $I(G)$ has linear resolution if and only if the complementary graph $G^c$ of $G$ is chordal.  On the other hand, Eagon--Reiner theorem (cf. \cite{ER} and \cite[Theorem 8.1.9]{HH}) guarantees that the vertex cover ideal $J(G)$ is Cohen--Macaulay, i.e., the quotient ring $S/J(G)$ is Cohen--Macaulay, if and only if $I(G)$ has linear resolution.  In other words, $J(G)$ is Cohen--Macaulay if and only if $G^c$ is chordal.  

Throughout this section, we will assume that $G$ is a finite graph whose complementary graph $G^c$ is chordal. We refer the reader to \cite{FHHM} for a quick introduction to Scarf resolutions.  

Let $\Delta(G^c)$ be the clique complex of $G^c$.  Thus $\Delta(G^c)$ is a quasi-forest.  Fix a leaf order $F_1, F_2, \ldots, F_q$ of the facets of $\Delta(G^c)$.  Let $\Ac$ denote the multiset of all intersections
\[
F_{i_1} \cap F_{i_2} \cap \cdots \cap F_{i_j}, \, \, \, \, \, 1 \leq i_1 < i_2 < \cdots < i_j \leq q, \, \, \, j \geq 2,
\]
of $F_1, \ldots, F_q$.  Let $\Ac^*$ denote the subset of $\Ac$ consisting of those $\Fc \in \Ac$ whose multiplicity is $1$.  For example, if $n=6$ and $F_1 = \{1,2,3\}, F_2 = \{2,3,4\}, F_3 = \{3,5\}, F_4 = \{4,6\}$, then the multiset $\Ac$ is $\{\{2,3\},\{4\},\{3\}^3, \emptyset^6\}$ and $\Ac^* = \{\{2,3\},\{4\}\}$.

\begin{Lemma}
\label{aaaaa}
Let $G_i$ be the branch of $F_i$ in the subcomplex $\langle F_1, \ldots, F_{i} \rangle$ of $\Delta(G^c)$.  If a subset $H\subset [n]$ belongs to $\Ac^*$, then there is $1 < i \leq q$ with $H = G_i \cap F_i$.
\end{Lemma}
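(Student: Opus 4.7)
The plan is to pull out the unique tuple representing $H$ in $\Ac^*$, take $i$ to be its maximum index, and argue via the leaf property that this tuple has length exactly two, at which point the conclusion $H = G_i \cap F_i$ is immediate.

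Concretely, since $H \in \Ac^*$ there is a unique set $T = \{i_1 < \cdots < i_j\} \subset [q]$ with $j \ge 2$ and $H = F_{i_1} \cap \cdots \cap F_{i_j}$. Set $i = i_j$. The leaf property of $F_i$ in $\langle F_1, \ldots, F_i\rangle$ gives $F_l \cap F_i \subset G_i \cap F_i$ for every $l < i$; intersecting these containments over $l = i_1, \ldots, i_{j-1}$ yields $H \subset G_i \cap F_i$, and in particular $H \subset G_i$. Writing $G_i = F_m$ with $m < i$, this forces $m \in T$, so $m = i_{k_0}$ for some $k_0 < j$.

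The crux is to rule out $j \ge 3$. Assuming $j \ge 3$, the plan is to exhibit a second tuple in $\Ac$ whose intersection is $H$, contradicting multiplicity one. By the leaf property every $F_{i_k} \cap F_i$ with $k < j$ is contained in $F_{i_{k_0}} \cap F_i$, hence in $F_{i_{k_0}}$; a short manipulation then shows that $\bigcap_{k \ne k_0,\, k \le j} F_{i_k}$ already equals $H$, producing the forbidden second tuple $T \setminus \{i_{k_0}\}$ of size $j - 1 \ge 2$. Consequently $j = 2$, so $T = \{i_{k_0}, i\}$ and $H = F_{i_{k_0}} \cap F_i = G_i \cap F_i$, as desired.

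The main obstacle is this redundancy argument: the leaf property ensures that $G_i \cap F_i$ absorbs every pairwise intersection $F_l \cap F_i$ with $l < i$, and since $G_i$ must itself be one of the $F_{i_k}$'s (forced by $H \subset G_i$), the tuple $T$ becomes ``over-described'' whenever $|T| \ge 3$, collapsing to a shorter tuple and contradicting the multiplicity-one hypothesis.
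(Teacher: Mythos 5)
Your proposal is correct and follows essentially the same route as the paper: both arguments use the leaf property to force the branch $G_{i_j}$ to appear among the $F_{i_k}$ (else the tuple could be enlarged without changing the intersection), and then show that when $j\geq 3$ the branch can be deleted from the tuple, in either case producing a second representation and contradicting multiplicity one. Your write-up is, if anything, slightly more explicit than the paper's about why $G_i=F_m$ must lie in $T$ and why the truncated intersection still equals $H$.
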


\begin{proof}
Let $F_{i_1} \cap F_{i_2} \cap \cdots \cap F_{i_j} \in \Ac$ with $1 \leq i_1 < i_2 < \cdots < i_j \leq q$ and $q \geq 2$.  If none of $F_{i_k}$ is a branch $G_{i_j}$ of $F_{i_j}$, then $F_{i_1} \cap F_{i_2} \cap \cdots \cap F_{i_j} = F_{i_1} \cap F_{i_2} \cap \cdots \cap F_{i_j} \cap G_{i_j}$ cannot belong to $\Ac$.  If $q \geq 3$ and if, say, $F_{i_k} = G_{i_j}$, then $F_{i_1} \cap F_{i_2} \cap \cdots \cap F_{i_j} = F_{i_1} \cap \cdots \cap F_{i_{k-1}} \cap F_{i_{k+1}} \cap \cdots \cap F_{i_j}$ cannot belong to $\Ac$.
\end{proof}

\begin{Lemma}
\label{bbbbb}
Let $G_i$ be the branch of $F_i$ in the subcomplex $\langle F_1, \ldots, F_{i} \rangle$ of $\Delta(G^c)$. Then $\Ac^*$ coincides with $\{G_i \cap F_i : 1 \leq i \leq q\}$ if and only if the following conditions are satisfied: 
\begin{enumerate}
    \item[(i)] $G_i$ is a unique branch of $F_i$; and
    \item[(ii)] $G_i \cap F_i \not\subset G_j \cap F_j$ for $i \neq j$.
\end{enumerate}
\end{Lemma}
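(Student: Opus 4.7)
The approach is to build on Lemma \ref{aaaaa}, which gives $\Ac^* \subseteq \{G_i \cap F_i : 1 \leq i \leq q\}$, and to characterize when each $G_i \cap F_i$ appears with multiplicity exactly one in $\Ac$; this is precisely when equality holds. Observe first that whenever $G_i, G_i'$ are both branches of $F_i$, taking $H = G_i'$ in the branch inclusion for $G_i$ and vice versa yields $G_i \cap F_i = G_i' \cap F_i$; hence the set $\{G_i \cap F_i\}$ is unambiguously defined even without (i). The plan is to argue both implications by counting realizations of $G_i \cap F_i$ as intersections of facet tuples.

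For the backward direction, assume (i) and (ii) and suppose $G_i \cap F_i = F_{i_1} \cap \cdots \cap F_{i_j}$ for some $1 \leq i_1 < \cdots < i_j \leq q$ with $j \geq 2$. Setting $m = i_j$, the branch property of $G_m$ gives $F_{i_k} \cap F_m \subseteq G_m \cap F_m$ for each $k < j$, so $\bigcap_k F_{i_k} \subseteq G_m \cap F_m$; condition (ii) then forces $m = i$. For each remaining $i_k$, the branch inclusion $F_{i_k} \cap F_i \subseteq G_i \cap F_i$ combined with the assumed equality forces $F_{i_k} \cap F_i = G_i \cap F_i$, so $F_{i_k}$ itself satisfies the branch-defining property of $F_i$ and hence equals $G_i$ by (i). As the $i_k$ are distinct, this means $j = 2$ and the unique realizing tuple is the pair $\{i, a\}$ with $F_a = G_i$, giving multiplicity exactly one.

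For the forward direction, assume $\Ac^* = \{G_i \cap F_i : 1 \leq i \leq q\}$. If (i) fails at some $i$, a second branch $G_i' \neq G_i$ produces the same intersection $G_i \cap F_i$ through the distinct pair $\{G_i', F_i\}$, so the multiplicity is at least two, contradicting $G_i \cap F_i \in \Ac^*$. If (ii) fails, choose $i \neq j$ with $G_i \cap F_i \subseteq G_j \cap F_j$; this yields $G_i \cap F_i \subseteq F_j$ and $G_i \cap F_i \subseteq G_j$. When $G_i \neq F_j$, the triple $\{G_i, F_i, F_j\}$ consists of three distinct facets whose intersection equals $G_i \cap F_i$, a second realization. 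When $G_i = F_j$, the leaf-order constraint (every branch $G_k$ has index $<k$) forces $j < i$, and then the triple $\{G_i, F_i, G_j\}$ is pairwise distinct (equalities among its entries would force either $i < j$, contradicting $j < i$, or $G_j = F_j$, contradicting the branch definition) and its intersection again recovers $G_i \cap F_i$. Either way the multiplicity exceeds one, a contradiction.

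The main obstacle is the last edge case: when $G_i = F_j$, the natural triple $\{G_i, F_i, F_j\}$ collapses to a pair, so one must exhibit a genuinely different three-element index set realizing $G_i \cap F_i$ and verify its distinctness by carefully invoking the index-ordering constraint intrinsic to the notion of a branch in a leaf order.
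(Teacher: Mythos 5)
Your proof is correct and takes essentially the same route as the paper's: the backward direction is identical (use (ii) to force $i_j=i$, then use (i) to force every other factor to equal the unique branch $G_i$, so $j=2$), and the forward direction likewise refutes (i) via the pair $\{G_i',F_i\}$ and refutes (ii) by exhibiting a second realization of $G_i\cap F_i$ as a triple intersection. The only cosmetic difference is your case split on whether $G_i=F_j$ versus the paper's split on whether $i<j$ or $j<i$; both handle the degenerate collapse of the triple in the same way, by substituting $G_j$ for the repeated facet.
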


\begin{proof}
If $G_i$ and $G_i'$ are branches of $F_i$ with $G_i \neq G_i'$, then $G_i\cap F_i = G_i' \cap F_i$ cannot belong to $\Ac$.  Suppose that $G_i \cap F_i \subset G_j \cap F_j$ for $i \neq j$.  Let $i < j$.  Then $G_i \cap F_i = G_i \cap F_i \cap F_j$ cannot belong to $\Ac$.  Let $j < i$.  Then $G_i \cap F_i = G_i \cap F_i \cap G_j$ cannot belong to $\Ac$ unless $G_j = G_i$.  If $G_j = G_i$, then $G_i \cap F_i = G_j \cap F_i \cap F_j$ cannot belong to $\Ac$. 
Thus (i) and (ii) are satisfied if $\Ac^* = \{G_i \cap F_i : 1 \leq i \leq q\}$.

Now, suppose that (i) and (ii) are satisfied.  Let $G_i \cap F_i = F_{i_1} \cap \cdots \cap F_{i_j}$ with $1 \leq i_1 < \cdots < i_j \leq q$ and $j \geq 2$.  Since $F_{i_1} \cap \cdots \cap F_{i_j} \subset G_{i_j} \cap F_{i_j}$, one has $G_i \cap F_i \subset G_{i_j} \cap F_{i_j}$.  Hence $i = i_j$ by (ii).  Thus $G_{i_j} \cap F_{i_j} = F_{i_1} \cap \cdots \cap F_{i_j}$.  However, since a branch of $F_{i_j}$ is unique by (i), this cannot happen unless $j=2$ and $F_{i_1} = G_{i_j}$.  Hence $G_i \cap F_i \in \Ac^*$, as desired.      
\end{proof}

We are ready to state our result that characterizes when a Cohen--Macaulay vertex cover ideal has a Scarf resolution.
We say that a leaf order $F_1, \ldots, F_q$ of the facets of the clique complex $\Delta(G^c)$ is {\em sensitive} if the conditions (i) and (ii) of Lemma \ref{bbbbb} are satisfied.

\begin{Theorem}
\label{scarf}
Let $G$ be a finite graph on $[n]$ whose complementary graph $G^c$ is chordal.  Then the following conditions are equivalent:
\begin{itemize}
    \item[(i)] the vertex cover ideal $J(G)$ has a Scarf resolution;
    \item[(ii)] a leaf order of $\Delta(G^c)$ is sensitive;  
    \item[(iii)] every leaf order of $\Delta(G^c)$ is sensitive.   
\end{itemize} 
\end{Theorem}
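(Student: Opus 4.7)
The plan is to identify the cardinality $|\Ac^*|$, which plainly does not depend on the leaf order chosen, as the common quantity governing all three conditions, and to show that each of (i), (ii), (iii) is equivalent to $|\Ac^*|=q-1$. The first ingredient is the shape of the minimal free resolution of $J(G)$. Because $G^c$ is chordal, $S/J(G)$ is Cohen--Macaulay; and since the minimal non-faces of $\Delta(G^c)$ are precisely the edges of $G$, the facets of $\Delta(G^c)^\vee$ all have cardinality $n-2$. Hence $\dim S/J(G)=n-2$ and $\pd J(G)=1$, which together with the rank equation forces $\beta_1(J(G)) = q-1$.

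I would next pin down what the Scarf complex of $J(G)$ looks like. Its minimal generators are $u_i=x_{[n]\setminus F_i}$, so $\lcm(u_i:i\in\sigma) = x_{[n]\setminus(\bigcap_{i\in\sigma}F_i)}$, and $\sigma$ is a face of the Scarf complex if and only if $\bigcap_{i\in\sigma}F_i$ is uniquely attained among all intersections of facets. The argument in the proof of Lemma \ref{aaaaa} shows that no $\sigma$ of cardinality $\geq 3$ can yield a unique intersection: either the last-indexed facet in $\sigma$ has a branch absent from $\sigma$, which can be adjoined without changing the intersection, or such a branch already appears in $\sigma$ and can be removed without changing the intersection. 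Thus the Scarf complex consists only of vertices and edges, and its edges are in bijection with $\Ac^*$. Combined with the principle from \cite{BPS} that the Scarf complex always supports a multigraded subcomplex of the minimal free resolution, together with the automatic matching of ranks in homological degrees $0$ and $\geq 2$, this gives that (i) is equivalent to $|\Ac^*| = \beta_1(J(G)) = q-1$.

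It remains to match $|\Ac^*|=q-1$ with sensitivity. If some leaf order is sensitive, then Lemma \ref{bbbbb} yields $\Ac^* = \{G_i\cap F_i : 2\leq i\leq q\}$, and condition (ii) of sensitivity makes these intersections pairwise incomparable, hence pairwise distinct, so $|\Ac^*|=q-1$. Conversely, assume $|\Ac^*|=q-1$ and fix any leaf order. By Lemma \ref{aaaaa}, each $H\in\Ac^*$ equals $G_i\cap F_i$ for some branch $G_i$ of $F_i$; and since $H$ is uniquely represented as a pairwise intersection, with the branch being the smaller-indexed facet of the unique pair, the index $i$ is determined by $H$. The map $H\mapsto i$ is therefore an injection $\Ac^*\hookrightarrow\{2,\dots,q\}$, and the equal cardinalities force a bijection, so $\Ac^*=\{G_i\cap F_i:2\leq i\leq q\}$ and Lemma \ref{bbbbb} makes the leaf order sensitive.

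The main technical input, namely the vanishing of higher-dimensional Scarf faces in the chordal case, is already essentially carried out inside the proof of Lemma \ref{aaaaa}; the rest is bookkeeping around the two preceding lemmas and the projective-dimension computation, with the fact that quasi-forests admit leaf orders (needed for (iii) $\Rightarrow$ (ii) to have content) built into the definition.
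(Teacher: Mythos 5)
Your proposal is correct and follows essentially the same route as the paper: reduce condition (i) to the count $|\Ac^*|=q-1$ using $\projdim J(G)=1$ and $\beta_1(J(G))=q-1$, then translate that count into sensitivity via Lemmas \ref{aaaaa} and \ref{bbbbb}, exploiting that $|\Ac^*|$ does not depend on the chosen leaf order. The only immaterial differences are that you derive $\projdim J(G)=1$ from the dimension of $\Delta(G^c)^\vee$ together with Cohen--Macaulayness rather than from $\reg I(G)=2$, and $\beta_1(J(G))=q-1$ from a rank count rather than Hilbert--Burch; you also spell out the counting details that the paper leaves implicit.
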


\begin{proof}
Work with a fixed leaf order $F_1, \ldots, F_q$ of the facets of the clique complex $\Delta(G^c)$.  The vertex cover ideal $J(G)$ is generated by the monomials $u_i = x_{[n] \setminus F_i}$.  Furthermore, the least common multiple of $u_i$ and $u_j$ is $x_{[n] \setminus (F_i \cap F_j)}$.  It was discussed in the proof of Theorem \ref{Gorenstein} that $\projdim J(G)= 1$ and that the first Betti number of $I_G$ is equal to $q - 1$.  Hence $J(G)$ has a Scarf resolution if and only if the number of monmials of the form $x_{[n] \setminus (F_i \cap F_j)}$ with $i < j$ which belong to the Scarf complex of $J(G)$ is $q - 1$, in other words, if and only if the number of subsets of $[n]$ of the form $F_i \cap F_j$ with $i < j$ which belong to $\Ac^*$ is $q - 1$, where $\Ac^*$ was introduced just before Lemma \ref{aaaaa}.  Now, Lemma \ref{aaaaa} says that each element of $\Ac^*$ is of the form $G_i \cap F_i$, where $G_i$ is a branch of $F_i$ in $\langle F_1, \ldots, F_{i} \rangle$.  Furthermore, Lemma \ref{bbbbb} guarantees that $|\Ac^*| = q - 1$ if and only if the leaf order $F_1, \ldots, F_q$ is sensitive.  This completes the proof of (ii) $\Rightarrow$ (i) and (i) $\Rightarrow$ (iii) of Theorem \ref{scarf}.  Finally (iii) $\Rightarrow$ (ii) is obvious. 
\end{proof}

\begin{Example}
(a) Let $G$ be a finite graph for which $G^c$ is chordal and has exactly three facets $F, F', F''$.  Since $G$ has no isolated vertex, $F \cap F' \cap F'' = \emptyset$.  Suppose that $F, F', F''$ is a leaf order.  Then $F, F', F''$ is sensitive if and only if $F \cap F' \neq \emptyset$ and $(F \cup F') \cap F'' \neq \emptyset$.

(b) Let $G$ be a finite graph on $[n]$ for which $G^c$ is a forest, i.e., $G^c$ has no cycle.  Then $J(G)$ has a Scarf resolution if and only if $G^c$ is a path, i.e., the edges of $G^c$ are $\{1,2\}, \{2,3\}, \ldots, \{n-1,n\}$ by rearranging the vertices of $G$. 
\end{Example}

A {\em complete bipartite graph} of type $(a,b)$, where $a$ and $b$ are positive integers, is a finite graph on $[a+b]$ whose edges are those $\{i,j\}$ with $1 \leq i \leq a$ and $a < j \leq a+b$. The following result was already proved in \cite[Theorem 4.3]{CPSFTY}. We include the result to showcase yet another application of the Eagon-Reiner formula \cite{ER}.

\begin{Theorem}
\label{Gorenstein}
The vertex cover ideal $J(G)$ is Gorenstein, i.e., $S/J(G)$ is Gorenstein, if and only if $G$ is a complete bipartite graph.
\end{Theorem}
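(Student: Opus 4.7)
The plan is to exploit the fact that under the hypothesis $G^c$ chordal (which the Gorenstein condition forces through Eagon--Reiner), the ideal $J(G)$ always has very short projective dimension. First I would dispose of the easy direction: if $G$ is complete bipartite of type $(a,b)$ then the only minimal vertex covers are the two colour classes, so $J(G)=(x_1\cdots x_a,\,x_{a+1}\cdots x_{a+b})$ is a complete intersection of two monomials with disjoint supports, and hence $S/J(G)$ is Gorenstein.

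For the converse, I would start from the observation that Gorenstein implies Cohen--Macaulay, so $I(G)$ has a linear resolution by Eagon--Reiner and $G^c$ is chordal by Fr\"oberg. Let $F_1,\ldots,F_q$ be the facets of the quasi-forest $\Delta(G^c)$, so that $J(G)$ has generators $u_i=x_{[n]\setminus F_i}$, one per facet. Because $\Delta(G^c)$ is a flag complex whose minimal non-faces are exactly the edges of $G$, every minimal non-face has cardinality $2$, so every facet of $\Delta(G^c)^{\vee}$ has size $n-2$. Therefore $\dim S/J(G)=n-2$ and by Auslander--Buchsbaum (using Cohen--Macaulayness) $\projdim S/J(G)=2$, i.e.\ $\projdim J(G)=1$. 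The minimal free resolution of $J(G)$ then takes the shape
\[
0 \To S^{\beta_1} \To S^{q} \To J(G) \To 0,
\]
and counting ranks (since $J(G)$ is an ideal of the domain $S$, it has rank $1$) yields $\beta_1(J(G))=q-1$.

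Now I would invoke the Gorenstein $\iff$ type $1$ criterion: the type of the Cohen--Macaulay ring $S/J(G)$ equals the last Betti number $\beta_2(S/J(G))=\beta_1(J(G))=q-1$. Hence $J(G)$ is Gorenstein if and only if $q=2$, i.e.\ $\Delta(G^c)$ has exactly two facets $F_1$ and $F_2$.

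Finally I would translate $q=2$ back to a combinatorial statement about $G$. The no-isolated-vertex hypothesis gives $F_1\cup F_2=[n]$ (each vertex lies in some facet), and any vertex $v\in F_1\cap F_2$ would be adjacent in $G^c$ to every other vertex of $[n]$, hence isolated in $G$, a contradiction; so $F_1\cap F_2=\emptyset$. With $|F_1|=a$, $|F_2|=b$ the two facets form a partition of $[n]$ into two cliques of $G^c$, and the edges of $G$ are exactly the pairs crossing the partition, exhibiting $G$ as the complete bipartite graph of type $(a,b)$. The only subtlety I foresee is making sure the small calculation $\dim S/J(G)=n-2$ is justified by pointing at the flag structure of $\Delta(G^c)$; everything else follows from rank counting and the standard Cohen--Macaulay/type dictionary.
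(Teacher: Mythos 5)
Your proposal is correct and follows essentially the same route as the paper: the easy direction via the complete intersection $(x_1\cdots x_a,\,x_{a+1}\cdots x_{a+b})$, and the converse by forcing $\projdim J(G)=1$, deducing $\beta_1(J(G))=q-1$, using the type-one criterion to get exactly two minimal generators, and translating that back to a complete bipartite structure. The only (harmless) variations are technical: you obtain $\projdim J(G)=1$ from the dimension count $\dim S/J(G)=n-2$ plus Auslander--Buchsbaum where the paper uses $\reg I(G)=2$ and Terai's formula, and you get $\beta_1=q-1$ by rank counting where the paper cites Hilbert--Burch.
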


\begin{proof}
First, suppose that $G$ is a complete graph of type $(a,b)$ whose vertices are $x_1, \ldots, x_a, y_1, \ldots, y_b$ and whose edges are those $\{x_i, y_j\}$ with $1 \leq i \leq a, 1 \leq j \leq b$.  Then $J(G) = (x_1 \cdots x_a, y_1 \cdots y_b)$.  Thus $S/J(G)$ is a complete intersection and, in particular, is Gorenstein.

Second, suppose that $S/J(G)$ is Gorenstein.  Since $I(G)$ has linear resolution, one has $\reg I(G) = 2$, where $\reg I(G)$ is the regularity of $I(G)$.  Hence $\projdim J(G)= 1$, where $\projdim J(G)$ is the projective dimension of $J(G)$, see \cite[Proposition 8.1.10]{HH}.  Now, it follows from Hilbert--Burch theorem \cite[Lemma 9.2.4]{HH} that the first Betti number of $J(G)$ is equal to $s - 1$, where $s$ is the number of monomials belonging to the minimal system of monomial generators of $J(G)$.  Since $J(G)$ is Gorenstein, one has $s - 1 = 1$.  Thus $J(G)$ is minimally generated by exactly two monomials.  In other words, there are exactly two minimal vertex covers $W$ and $W'$ of $G$.  Since $W \cap W' = \emptyset$, the edges of $G$ are those $\{i,j\}$, where $i \in W$ and $j \in W'$.  Hence $G$ is a complete bipartite graph, as desired.
\end{proof}

\section{Betti numbers of vertex cover ideals of paths}
Let $P_n$ denote the path on $[n]$ with $E(P_n) = \{\{1,2\},\{2,3\},\ldots,\{n-1,n\}\}$.  The present section is devoted to computing the graded Betti number $\beta_{i,i+j}(J(P_n))$ of $J(P_n)$, where $i = \projdim J(P_n)$ and $j = \reg J(P_n)$.  The edge ideal $I(P_n)$ is sequentially Cohen--Macaulay and the vertex cover ideal $J(P_n)$ is componentwise linear following, for instance, \cite{FV, Nagoya}.  

It is known from \cite[Theorem 1.2]{HeVT} and \cite[Corollary 5.4]{BHO} that 
$$\reg I(P_n) = \left\{ \begin{array}{ll} \left\lfloor \dfrac{n}{3}\right\rfloor + 1 & \text{if } n \equiv 0, 1 (\text{mod } 3) \\
& \\
\left\lfloor \dfrac{n}{3} \right\rfloor + 2 & \text{if } n \equiv 2 (\text{mod } 3),\end{array}\right.$$
and
$$\projdim I(P_n) = \left\{ \begin{array}{ll} 2\left\lfloor \dfrac{n}{3}\right\rfloor - 1 & \text{if } n \equiv 0,1 (\text{mod } 3) \\
& \\
2\left\lfloor \dfrac{n}{3} \right\rfloor  & \text{if } n \equiv 2 (\text{mod } 3).\end{array}\right.$$

\medskip

Let $\Delta_n$ denote the clique complex $\Delta(P_n^c)$ of the complementary graph $P_n^c$ of $P_n$.  One has $\dim \Delta_{n} = [(n-1)/2]$. In general, if $\Delta$ is a simplicial complex with $\dim \Delta = d - 1$, then given an integer $0 \leq q < d$, we introduce the subcomplex $\Delta(q)$ whose facets are the faces $F$ of $\Delta$ with $|F| = q + 1$. Furthermore, we defines $\Delta(q)'$ to be the subcomplex whose facets are those faces $F$ of $\Delta$ with $|F| = q + 1$ for which $F$ is {\em not} a facet of $\Delta$.

Let $n = 3k$. Then $i = k$ and $j = 2k$.  It follows from \cite[Theorem 2.1]{Nagoya} that 
\[
\beta_{i,i+j}(J(P_n)) = h_{k}(k-1), 
\]
where $h(\Delta_n(k-1)) - h(\Delta_n(k-1)') = (h_0(k-1), h_1(k-1), \ldots, h_{k}(k-1))$.

\begin{Lemma}
\label{AAAAA}
A facet $F$ of $\Delta_{3k}$ with $|F| = k$ uniquely exists.  Let $F_0$ be the facet of $\Delta_{3k}$ with $|F_0| = k$.  Then each proper subset $F$ of $F_0$ is a subset of a facet $F'$ of $\Delta_{3k}$ with $|F'| > k$.   
\end{Lemma}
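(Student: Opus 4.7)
The plan is to exploit the fact that $\Delta_{3k}$ is the independence complex of the path $P_{3k}$, so its facets are precisely the maximal independent sets of $P_{3k}$. Two structural observations will drive the argument: a gap-counting identification of the size-$k$ facets, and a universal lower bound (via domination) that forces every facet to have size at least $k$.

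For existence and uniqueness of $F_0$, I would parametrize a facet $F = \{i_1 < i_2 < \cdots < i_s\} \subseteq [3k]$ by its gap sequence $d_0 = i_1 - 1$, $d_j = i_{j+1} - i_j - 1$ for $1 \leq j < s$, and $d_s = 3k - i_s$, which sum to $3k - s$. Independence forces $d_j \geq 1$ for interior $j$, while maximality forbids inserting any new vertex, translating to $d_0, d_s \leq 1$ and $d_j \leq 2$ internally. When $s = k$, the gaps must sum to $2k$, matching exactly the maximal possible $1 + 2(k-1) + 1 = 2k$; every inequality then saturates, pinning down $d_0 = d_k = 1$ and $d_j = 2$ for $1 \leq j < k$. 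This forces $F = \{2, 5, 8, \ldots, 3k-1\} = F_0$.

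For the second assertion, I would first observe that every facet of $\Delta_{3k}$ has size at least $k$: a maximal independent set is a dominating set, so $3|F| \geq |N[F]| = 3k$ since each vertex of $P_{3k}$ has at most two neighbors. Given $F \subsetneq F_0$, choose $j \in [k]$ with $3j-1 \notin F$ and set $w := 3j-2 \notin F_0$. The neighbors of $w$ in $P_{3k}$ lie in $\{3j-3,\, 3j-1\}$, and both are absent from $F \subseteq F_0 = \{3\ell - 1 : \ell \in [k]\}$: the former is not of that form, while the latter was chosen missing. Hence $F \cup \{w\}$ is independent; extending it to a facet $F'$ yields $w \in F' \setminus F_0$, so $F' \neq F_0$, whence $|F'| \neq k$ by uniqueness. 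Combined with the lower bound $|F'| \geq k$, this gives $|F'| > k$.

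The only delicate step is the maximality analysis in Part 1, namely verifying that $d_j \geq 3$ (respectively $d_0 \geq 2$ or $d_s \geq 2$) always allows a new vertex to be inserted; the case $d_j \in \{1,2\}$ is the exact threshold where no insertable vertex remains. Once this is recorded, uniqueness of $F_0$ falls out by saturation, and Part 2 is then almost formal, riding on this uniqueness together with the covering lower bound $|F'| \geq k$.
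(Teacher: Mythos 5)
Your proof is correct and follows the same route as the paper --- identifying $F_0=\{2,5,\dots,3k-1\}$ as the unique $k$-element facet of the independence complex of $P_{3k}$ and then extending each proper subset to a larger facet --- while supplying the two justifications the paper's proof leaves implicit: the gap-saturation count that pins down $F_0$, and the domination bound $|F'|\ge k$ valid for every facet, which is precisely what upgrades ``$F'\ne F_0$'' to ``$|F'|>k$''. No gaps.
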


\begin{proof}
The face $F_0 = \{2,5,8,\ldots,3k-1\}$ is a unique facet $F$ of $\Delta_{3k}$ with $|F| = k$.  If $F \subset F_0$ with $|F| = k-1$, then clearly $F$ is a subset of a facet $F'$ of $\Delta_{3k}$ with $|F'| > k$.  It then follows that each proper subset $F$ of $F_0$ is a subset of a facet $F'$ of $\Delta_{3k}$ with $|F'| > k$, as desired.    
\end{proof}

\begin{Corollary}
\label{BBBBB}
$\beta_{k, k+2k}(J(P_{3k})) = 1$.   
\end{Corollary}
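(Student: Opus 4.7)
The plan is to apply Lemma \ref{AAAAA} directly to the formula $\beta_{k,k+2k}(J(P_{3k})) = h_k(k-1)$ recalled just before that lemma, where $(h_0(k-1),\dots,h_k(k-1)) = h(\Delta_{3k}(k-1)) - h(\Delta_{3k}(k-1)')$. The entire task reduces to reading off the difference of the two relevant $f$-vectors and converting to $h$-vectors via (\ref{fh}).

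The first step is to compare $\Delta_{3k}(k-1)$ and $\Delta_{3k}(k-1)'$ face by face. The facets of the former are, by definition, all $k$-element faces of $\Delta_{3k}$, while the facets of the latter consist of exactly those $k$-element faces of $\Delta_{3k}$ that are \emph{not} facets of $\Delta_{3k}$. By the first assertion of Lemma \ref{AAAAA}, the only $k$-element face that is a facet of $\Delta_{3k}$ is $F_0 = \{2,5,\ldots,3k-1\}$; hence $F_0$ is the only $(k-1)$-dimensional face present in $\Delta_{3k}(k-1)$ but absent from $\Delta_{3k}(k-1)'$. By the second assertion of Lemma \ref{AAAAA}, every proper subset of $F_0$ still lies in some facet of $\Delta_{3k}$ of cardinality strictly greater than $k$, and therefore sits inside some facet of $\Delta_{3k}(k-1)'$ as well. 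Consequently, the only face of $\Delta_{3k}(k-1)$ missing from $\Delta_{3k}(k-1)'$ is $F_0$ itself.

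The second step is the $f$-to-$h$ conversion. The previous observation says that the difference $f(\Delta_{3k}(k-1)) - f(\Delta_{3k}(k-1)')$ has a unique nonzero entry, namely $1$ in position $k-1$. Both complexes are of dimension $k-1$ (this is clear when $k=1$ by direct inspection, and for $k\geq 2$ follows since $\Delta_{3k}$ has independent-set facets such as $\{1,3,5,\dots\}$ of size larger than $k$, whose $k$-subsets are non-facet faces of size $k$). Therefore the formula (\ref{fh}) applies uniformly with $d=k$ to both complexes, and only the $i=k$ term contributes to the difference, giving $h_k(k-1) = (-1)^k(-1)^k \cdot 1 = 1$. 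Plugging back into the formula from \cite[Theorem 2.1]{Nagoya} recalled above yields $\beta_{k,k+2k}(J(P_{3k})) = 1$.

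I do not expect any substantial obstacle: the statement is essentially a bookkeeping corollary of Lemma \ref{AAAAA}. The only care needed is the uniform dimension check mentioned above so that the two $h$-vectors have the same length and can be subtracted componentwise; once that is addressed, the computation collapses to a single term of (\ref{fh}).
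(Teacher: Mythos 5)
Your proposal is correct and follows essentially the same route as the paper: you invoke Lemma \ref{AAAAA} to see that the $f$-vectors of $\Delta_{3k}(k-1)$ and $\Delta_{3k}(k-1)'$ differ only by $1$ in the top position, and then apply formula (\ref{fh}) to get $h_k(k-1)=1$. The explicit dimension check you add (so that both $h$-vectors have length $k+1$ and can be subtracted entrywise) is a sensible extra precaution that the paper leaves implicit, but it does not change the argument.
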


\begin{proof}
Let $f(\Delta_{3k}(k-1))=(f_0,f_1,\ldots,f_{k-1}), f(\Delta_{3k}(k-1)')=(f'_0,f'_1,\ldots,f'_{k-1})$ be the $f$-vectors of $\Delta_{3k}(k-1), \Delta_{3k}(k-1)'$.  Lemma \ref{AAAAA} says that $f_i = f'_i$ for $0\leq i < k-1$ and $f_{k-1} = f'_{k-1} - 1$.  Hence by the formula (\ref{fh}) one has $h_{k}(k-1) = 1$, in other words, $\beta_{k, k+2k}(J(P_{3k})) = 1$, as desired.    
\end{proof}

\medskip

Let $n = 3k-1$. Then $i = k$ and $j = 2k-1$.  It follows from \cite[Theorem 2.1]{Nagoya} that 
\[
\beta_{i,i+j}(J(P_n)) = h_{k}(k-1), 
\]
where $h(\Delta_n(k-1)) - h(\Delta_n(k-1)') = (h_0(k-1), h_1(k-1), \ldots, h_{k}(k-1))$.

\begin{Lemma}
\label{CCCCC}
The number of facet F of $\Delta_{3k-1}$ with $|F| = k$ is $k+1$.  Let $F_1, \ldots, F_{k+1}$ be the facets of $\Delta_{3k-1}$ with each $|F_i| = k$.  Then the number of faces F of the subcomplex $\langle F_1, \ldots, F_{k+1} \rangle$ with $|F| = k-1$ for which there is no facet $F' \in \Delta_{3k-1}$ with $F \subset F'$ and $|F'| > k$ is $k$.  Furthermore, each face $F$ of $\langle F_1, \ldots, F_{k+1} \rangle$ with $|F| < k - 1$ is a subset of a facet $F' \in \Delta_{3k-1}$ with $|F'| > k$.   
\end{Lemma}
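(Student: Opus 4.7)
The plan is to work with the identification of $\Delta_{3k-1}$ as the independence complex of $P_{3k-1}$, so that each facet corresponds to a maximal independent set $\{b_1 < \cdots < b_m\}$, characterised by $b_1 \in \{1,2\}$, $b_m \in \{3k-2, 3k-1\}$, and consecutive gaps $b_{j+1} - b_j \in \{2,3\}$. First I would enumerate the size-$k$ facets: setting $\alpha = b_1 - 1$ and $\beta = (3k-1) - b_m$, summing the gaps shows the number of $2$-gaps is $\alpha + \beta - 1$ and the number of $3$-gaps is $k - \alpha - \beta$; non-negativity forces $\alpha + \beta \in \{1, 2\}$. The case $\alpha + \beta = 1$ gives the two \emph{extremal} facets $\{1, 4, \ldots, 3k-2\}$ and $\{2, 5, \ldots, 3k-1\}$, while $\alpha + \beta = 2$ yields $k - 1$ \emph{middle} facets, each starting at $2$, ending at $3k-2$, with its unique $2$-gap in one of $k - 1$ positions. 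This totals $k + 1$ facets, giving the first claim.

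Next I would analyse size-$(k-1)$ faces of $\langle F_1, \ldots, F_{k+1} \rangle$. Any such face has the form $F_i \setminus \{v\}$, and lies in a facet of $\Delta_{3k-1}$ of size $> k$ exactly when the region freed by removing $v$ accommodates at least two new independent vertices. A direct analysis of the merged interior gap or the relevant boundary length identifies the bad removals (for which this fails) as: $\{1, 4, \ldots, 3k-2\} \setminus \{1\}$; $\{2, 5, \ldots, 3k-1\} \setminus \{3k-1\}$; and, in each middle facet, the removal of either endpoint of its $2$-gap, totalling $2 + 2(k-1) = 2k$ bad removals with multiplicity. I would then identify exactly $k$ coincidences among distinct facets: the left extremal bad removal coincides with the left-endpoint bad removal of the middle facet with $2$-gap in position $1$, symmetrically on the right, and for $1 \leq i \leq k-2$ the right-endpoint bad removal of the middle facet with $2$-gap in position $i$ equals the left-endpoint bad removal of the one with $2$-gap in position $i+1$. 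Hence the distinct bad size-$(k-1)$ faces number $2k - k = k$, as claimed.

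For the final assertion I would take $F \subset F_i$ with $|F| = k - r$ and $r \geq 2$, and partition the deleted vertices of $F_i$ into maximal blocks of consecutive indices $B_1, \ldots, B_t$ of sizes $c_1, \ldots, c_t$. Each block either sits in the interior of $F_i$ (merging $c_l + 1$ consecutive original gaps) or at the left/right boundary. A per-block computation of the maximum independent subset that fits in the resulting merged region shows that block $B_l$ admits at least $c_l + 1$ new independent vertices, with the sole exception of a bad single (an isolated deletion $c_l = 1$ at one of the special positions catalogued above), which contributes only $1$. Since each $F_i$ contains at most two such special positions, and they are adjacent indices that fuse into a single size-$2$ block contributing $3$ when both are deleted, at most one deficient block occurs per $F$; write $d \in \{0, 1\}$ for this count. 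Moreover, $d = 1$ forces $t \geq 2$, since a lone deficient block would mean $r = c_l = 1$, contradicting $r \geq 2$. Summing then gives total extension size at least $(k - r) + (r + t - d) = k + t - d \geq k + 1$, so $F$ sits in a facet of size $> k$. The main obstacle I anticipate is precisely this block-by-block bookkeeping: verifying the $c_l + 1$ lower bound across every combination of interior/boundary position and presence or absence of the unique $2$-gap, and confirming that the identifications in the middle step exhaust all coincidences.
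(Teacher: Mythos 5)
Your proposal is correct, and for the second and third assertions it follows a genuinely different route from the paper. The paper proves everything by writing a face as $\{i_1<\cdots<i_m\}$ and working directly with the gap sequence: it characterizes the bad $(k-1)$-faces as exactly three explicit gap-patterns (first entry $4$ with all gaps $3$; last entry $3k-4$ with all gaps $3$; or one gap equal to $5$ and the rest $3$), counts $1+1+(k-2)=k$, and then disposes of the faces of size $k-2$ (and hence all smaller faces, by inclusion) with a one-line assertion. You instead exploit that every $(k-1)$-face of $\langle F_1,\ldots,F_{k+1}\rangle$ is a single-vertex deletion $F_i\setminus\{v\}$: the deletion is bad precisely when the freed region absorbs the facet's unique deficient gap, giving $2k$ bad deletions with multiplicity, and the $k$ pairwise coincidences you list do exhaust all collisions (the $k$ resulting faces are pairwise distinct, being exactly the paper's three patterns), so the count is $2k-k=k$. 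For $|F|<k-1$ your block decomposition of the deleted set, with the per-block lower bound of $c_l+1$ addable vertices except for at most one deficient singleton (which forces $t\ge 2$), correctly yields an extension of size at least $k+t-d\ge k+1$; I checked the bound $\lceil (3c_l-1)/2\rceil\ge c_l+1$ for $c_l\ge 2$ and the adjacency of the two special positions in a middle facet, and both hold. The trade-off: the paper's direct gap-pattern classification is shorter and immediately exhibits the $k$ bad faces, while your deletion-plus-coincidence count and block argument are more mechanical and give a uniform, fully justified treatment of all faces of size below $k-1$, where the paper only asserts the $k-2$ case.
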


\begin{proof}
Let $F = \{i_1, i_2, \ldots, i_k\}$ be a face of $\Delta_{3k-1}$, where $1 \leq i_1 < i_2 < \cdots < i_k \leq 3k-1$.  Let $n_j = i_j - i_{j-1}$ for $1 < j \leq k$.  Then $F$ is a facet of $\Delta_{3k-1}$ if and only if $n_1 \leq 2, i_k \geq 3k-2$ and each $n_j \leq 3$.  Since $1 + 1 + 2(k-1) + k = 3k$, the number of facet F of $\Delta_{3k-1}$ with $|F| = k$ is $k+1$. 

Let $F = \{i_1, i_2, \ldots, i_{k-1}\}$ be a face of $\langle F_1, \ldots, F_{k+1} \rangle$, where $1 \leq i_1 < i_2 < \cdots < i_{k-1} \leq 3k-1$.  Let $n_j = i_j - i_{j-1}$ for $1 < j < k$.  Then there is no facet $F' \in \Delta_{3k-1}$ with $F \subset F'$ and $|F'| > k$ if and only if one of the following conditions are satisfied:
\begin{itemize}
\item[(i)] 
$n_1 = 4, i_k = 3k-2$ and each $n_j = 3$;
\item[(ii)]
$n_1 = 2, i_k = 3k-4$ and each $n_j = 3$; 
\item[(iii)]
$n_1 = 2, i_k = 3k-2$ and there is $j_0$ with $n_{j_0} = 5$ and each $n_j = 3$ for $j \neq j_0$.     
\end{itemize}
Thus the number of faces $F$ of $\langle F_1, \ldots, F_{k+1} \rangle$ with $|F| = k-1$ for which there is no facet $F' \in \Delta_{3k-1}$ with $F \subset F'$ and $|F'| > k$ is $k$.  

If $F$ is a face of $\langle F_1, \ldots, F_{k+1} \rangle$ with $|F| = k - 2$, then $F$ is a subset of a facet $F' \in \Delta_{3k-1}$ with $|F'| > k$.  Thus each face $F$ of $\langle F_1, \ldots, F_{k+1} \rangle$ with $|F| < k - 1$ is a subset of a facet $F' \in \Delta_{3k-1}$ with $|F'| > k$.
\end{proof}

\begin{Corollary}
\label{DDDDD}
$\beta_{k, k+(2k-1)}(J(P_{3k-1})) = 1$.   
\end{Corollary}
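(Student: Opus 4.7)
The plan is to follow the same pattern as the proof of Corollary \ref{BBBBB}: use Lemma \ref{CCCCC} to read off the $f$-vector differences of $\Delta_{3k-1}(k-1)$ and $\Delta_{3k-1}(k-1)'$, and then apply formula (\ref{fh}) to extract $h_k(k-1)$, which by \cite[Theorem 2.1]{Nagoya} equals $\beta_{k,k+(2k-1)}(J(P_{3k-1}))$.

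First I would set $f(\Delta_{3k-1}(k-1)) = (f_0, \ldots, f_{k-1})$ and $f(\Delta_{3k-1}(k-1)') = (f'_0, \ldots, f'_{k-1})$, so that $f_j - f'_j$ counts precisely those $(j+1)$-subsets of $[3k-1]$ that sit in some $k$-face of $\Delta_{3k-1}$ but in no $k$-face which is a non-facet of $\Delta_{3k-1}$. The first assertion of Lemma \ref{CCCCC} immediately yields $f_{k-1} - f'_{k-1} = k+1$. For the next index, I would observe that a $(k-1)$-face $F$ contributes to $f_{k-2} - f'_{k-2}$ precisely when $F$ lies in some $F_i$ and in no facet of $\Delta_{3k-1}$ of size $> k$; the second assertion of Lemma \ref{CCCCC} then gives $f_{k-2} - f'_{k-2} = k$.

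Next I would dispose of the lower-dimensional faces via the last part of Lemma \ref{CCCCC}: every face of $\langle F_1, \ldots, F_{k+1}\rangle$ of size $< k-1$ sits inside some facet of $\Delta_{3k-1}$ of size $>k$, hence inside some non-facet $k$-face, so it belongs to $\Delta_{3k-1}(k-1)'$. Combined with the observation that any face of $\Delta_{3k-1}(k-1)$ lying outside $\langle F_1, \ldots, F_{k+1}\rangle$ is by construction contained in a non-facet $k$-face, this forces $f_j = f'_j$ for all $j \leq k-3$.

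Finally I would plug these differences into formula (\ref{fh}) with $d = k$. Only the top two terms survive, giving
\[
h_k(k-1) = (-1)^k\bigl[(-1)^{k-1}k + (-1)^k(k+1)\bigr] = (k+1) - k = 1,
\]
which is $\beta_{k,k+(2k-1)}(J(P_{3k-1})) = 1$. The most delicate step is the $j = k-2$ bookkeeping: I must verify that the $k$ ``non-extendable'' faces identified in Lemma \ref{CCCCC} really account for all contributions to $f_{k-2} - f'_{k-2}$, taking care in particular of those $(k-1)$-faces of $\Delta_{3k-1}(k-1)$ that lie outside $\langle F_1, \ldots, F_{k+1}\rangle$ (which must automatically lie in $\Delta_{3k-1}(k-1)'$, since the $k$-face containing them cannot be one of the $F_i$'s).
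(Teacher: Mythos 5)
Your proposal is correct and follows essentially the same route as the paper's proof: read the $f$-vector differences of $\Delta_{3k-1}(k-1)$ and $\Delta_{3k-1}(k-1)'$ off of Lemma \ref{CCCCC} ($f_{k-1}-f'_{k-1}=k+1$, $f_{k-2}-f'_{k-2}=k$, all lower differences zero) and substitute into formula (\ref{fh}) to get $h_k(k-1)=(k+1)-k=1$. The extra bookkeeping you flag for faces outside $\langle F_1,\ldots,F_{k+1}\rangle$ is a detail the paper leaves implicit, and your handling of it is sound.
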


\begin{proof}
Let $f(\Delta_{3k-1}(k-1))=(f_0,f_1,\ldots,f_{k-1}), f(\Delta_{3k-1}(k-1)')=(f'_0,f'_1,\ldots,f'_{k-1})$ be the $f$-vectors of $\Delta_{3k-1}(k-1), \Delta_{3k-1}(k-1)'$.  It follows from Lemma \ref{CCCCC} that $f_i = f'_i$ for $0\leq i < k-2$ and that 
\[
f_{k-1} - f'_{k-1} = k + 1, \, \, \, \, \, f_{k-2} - f'_{k-2} = k.
\]
Hence by the formula (\ref{fh}) one has $h_{k}(k-1) = 1$, in other words, $\beta_{k, k+2k-1}(J(P_{3k-1})) = 1$, as desired. 
\end{proof}

\medskip

Let $n = 3k+1$. Then $i = k$ and $j = 2k$.  It follows from \cite[Theorem 2.1]{Nagoya} that 
\[
\beta_{i,i+j}(J(P_n)) = h_{k}(k), 
\]
where $h(\Delta_n(k)) - h(\Delta_n(k)') = (h_0(k-1), h_1(k-1), \ldots, h_{k+1}(k))$.

\begin{Lemma}
\label{EEEEE}
(1) Let $\Fc_k$ denote the set of those faces $F \in \Delta_{3k+1}$ with $|F| = k$ for which $F$ is a subset of a facet $F' \in \Delta_{3k+1}$ with $|F'| = k+1$, but not a subset of a facet $F'' \in \Delta_{3k+1}$ with $|F''| > k+1$.  Then $|\Fc_k| = (k+1)^2$.  

(2) Let $\Fc_{k-1}$ denote the set of those faces $F \in \Delta_{3k+1}$ with $|F| = k-1$ for which $F$ is a subset of a facet $F' \in \Delta_{3k+1}$ with $|F'| = k+1$, but not a subset of a facet $F'' \in \Delta_{3k+1}$ with $|F''| > k+1$.  Then $|\Fc_{k-1}| = k(k+1)/2$. 

(3) Each face $F$ of $\Delta_{3k+1}$ with $|F| = k-2$ is a subset of a facet $F' \in \Delta_{3k+1}$ with $|F'| > k+1$.
\end{Lemma}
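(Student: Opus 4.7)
The plan is to translate the lemma into a counting problem about the gap sequence of each independent set of the path $P_{3k+1}$ (recall that $\Delta_{3k+1}$ is the complex of independent sets of $P_{3k+1}$). For a face $F = \{i_1 < \cdots < i_t\} \in \Delta_{3k+1}$, set $g_0 = i_1 - 1$, $g_j = i_{j+1} - i_j - 1$ for $1 \le j \le t-1$, and $g_t = 3k+1 - i_t$. Then $g_0, g_t \ge 0$, $g_j \ge 1$ for middle $j$, and $\sum_{j=0}^{t} g_j = 3k+1 - t$. A direct slot-by-slot count of the vertices that may be adjoined to $F$ while keeping independence (in each middle slot one discards the two endpoints adjacent to $i_j, i_{j+1}$; in each boundary slot one discards the single adjacent endpoint) shows that the largest facet of $\Delta_{3k+1}$ containing $F$ has cardinality $t + \sigma(F)$, where
\[
\sigma(F) = \Bigl\lfloor \tfrac{g_0}{2} \Bigr\rfloor \; + \; \sum_{j=1}^{t-1} \Bigl\lfloor \tfrac{g_j-1}{2} \Bigr\rfloor \; + \; \Bigl\lfloor \tfrac{g_t}{2} \Bigr\rfloor.
\]
The conditions defining $\Fc_k$ and $\Fc_{k-1}$ then amount to $\sigma(F) = 1$ for $t = k$ and $\sigma(F) = 2$ for $t = k - 1$ respectively, while (3) becomes $\sigma(F) \ge 4$ for every face of size $t = k - 2$.

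For (3) I would use the slot-wise upper bounds $g_0 \le 2\lfloor g_0/2 \rfloor + 1$, $g_j \le 2\lfloor (g_j-1)/2 \rfloor + 2$ for middle $j$, and $g_t \le 2\lfloor g_t/2 \rfloor + 1$. Summing and substituting $\sum_j g_j = 2k + 3$ with $t = k - 2$ forces $2k + 3 \le 2\sigma(F) + 2(k-2)$, so $\sigma(F) \ge 7/2$, i.e.\ $\sigma(F) \ge 4$, which gives (3).

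For (1) and (2), I would classify gap sequences with $\sigma(F) = 1$ or $2$ according to which of the $t+1$ floor contributions are positive, and then reduce each pattern to a $\{0,1\}$-valued linear equation. In (1), $t = k$ and exactly one contribution equals $1$; there are three positional families (the distinguished term is $\lfloor g_0/2 \rfloor$, some $\lfloor (g_{j_0}-1)/2 \rfloor$ with $1 \le j_0 \le k-1$, or $\lfloor g_k/2 \rfloor$). In each family every remaining gap is confined to a window of size two, and plugging $\sum_j g_j = 2k + 1$ reduces the count to that of $\{0,1\}$-tuples of length $k+1$ summing to $k$. The two boundary families contribute $k+1$ sequences each, the middle family contributes $(k-1)(k+1)$, for a grand total of $(k+1)^2$.

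In (2), $t = k - 1$, $\sum_j g_j = 2k + 2$, and $\sigma(F) = 2$ forces the floor terms to be either one $2$ (three positional sub-cases: $\lfloor g_0/2 \rfloor$, some middle $\lfloor (g_{j_0}-1)/2 \rfloor$, or $\lfloor g_{k-1}/2 \rfloor$) or two $1$'s (four positional sub-cases: both boundaries; $g_0$ together with a middle; $g_{k-1}$ together with a middle; or two middles). In each of the seven resulting patterns the constrained gaps again lie in length-two windows and the residual $\{0,1\}$-equation on the free variables admits exactly one solution, which after tallying yields
\[
1 + (k-2) + 1 + 1 + 2(k-2) + \binom{k-2}{2} = \tfrac{k(k+1)}{2}.
\]
The main obstacle is the bookkeeping in (2): one must enumerate all seven positional patterns and verify that, after fixing which floor terms are nonzero and the windows of the constrained gaps, the residual equation is uniquely solvable in each. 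The floor-based parametrization keeps this manageable, as every case reduces to counting $\{0,1\}$-tuples with a prescribed sum of length equal to the number of free gap-slots.
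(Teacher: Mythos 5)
Your proposal is correct and follows essentially the same route as the paper: the paper's lists of face ``types'' are exactly representatives of your gap patterns, and your tallies $2(k+1)+(k-1)(k+1)=(k+1)^2$ and $3+3(k-2)+\binom{k-2}{2}=\tfrac{k(k+1)}{2}$ agree with its counts. The only substantive difference is that your formula for $\sigma(F)$ (the excess of the largest facet containing $F$ over $|F|$) makes explicit both the reduction of the facet conditions to $\sigma(F)=1,2,\geq 4$ and the completeness of the case list, which the paper leaves implicit.
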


\begin{proof}
(1) With ignoring the symmetric structure, each face belonging to $\Fc$ can be regarded to be one of the following types:
\begin{itemize}
    \item[]
    (type $a_1^{(1)}$) 
    $\{3,6,9,\ldots,3k\}$
    \item[]
    (type $a_1^{(2)}$) 
    $\{4,7,10,\ldots,3k-2,3k\}$
    \item[]
    (type $a_1^{(3)}$) 
    $\{4,7,10,\ldots,3k-2,3k+1\}$
    \item[]
    (type $a_2^{(1)}$)
    $\{1,6,9,\ldots,3k\}$
    \item[]
    (type $a_2^{(2)}$)
    $\{2,6,9,\ldots,3k\}$
    \item[]
    (type $a_2^{(3)}$)
    $\{2,7,10,\ldots,3k+1\}$
    \item[]
    (type $a_2^{(4)}$) 
    $\{2,7,10,\ldots,3k-2,3k\}$
\end{itemize}
The number of faces of types $a_1^{(1)}, a_1^{(2)}, a_1^{(3)}$ is $2(k+1)$ and that of $a_2^{(1)}, a_2^{(2)}, a_2^{(3)}, a_2^{(4)}$ is $(k+1)(k-1)$.  Thus $|\Fc_k| = 2(k+1) + (k+1)(k-1) = (k+1)^2$.   

(2) With ignoring the symmetric structure, each face belonging to $\Fc'$ can be regarded to be one of the following types:
\begin{itemize}
    \item[]
    (type $a_1^{(1)}$) 
    $\{6,9,\ldots,3k\}$
    \item[]
    (type $a_1^{(2)}$) 
    $\{2,9,12,\ldots,3k\}$
    \item[]
    (type $a_2^{(1)}$) 
    $\{4,7,10,\ldots,3k-2\}$
    \item[]
    (type $a_2^{(2)}$) 
    $\{4,9,12,\ldots,3k\}$
    \item[]
    (type $a_2^{(3)}$) 
    $\{2,7,12,15,18,\ldots,3k\}$
\end{itemize}
The numbers of faces of types $a_1^{(1)}, \ldots, a_2^{(3)}$ are $2, k-2, 1, 2(k-2), \binom{k-2}{2}$.  Thus $|\Fc_k| = k(k+1)/2$. 

(3) Since $(3k+1) - (k-2) - 2(k-2) = 7 > 6$, each face $F$ of $\Delta_{3k+1}(k)$ with $|F| = k-2$ is a subset of a facet $F'$ with $|F'| > k+1$. 
\end{proof}

\begin{Corollary}
\label{FFFFF}
$\beta_{k, k+2k}(J(P_{3k+1})) = k+1$.   
\end{Corollary}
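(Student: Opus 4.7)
My plan is to mirror the template used in the proofs of Corollaries \ref{BBBBB} and \ref{DDDDD}, but with formula (\ref{fh}) replaced by formula (\ref{fhfh}). Indeed, $\dim \Delta_{3k+1}(k) = k$ and so the relevant $h$-vector $h(\Delta_{3k+1}(k)) - h(\Delta_{3k+1}(k)')$ now has length $k+2$; the target entry $h_k(k)$ is its \emph{penultimate} coordinate rather than its top one, which is precisely what (\ref{fhfh}) computes with $d = k+1$.

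Let $f(\Delta_{3k+1}(k))=(f_0,\ldots,f_{k})$ and $f(\Delta_{3k+1}(k)')=(f'_0,\ldots,f'_{k})$. The quantity $f_j - f'_j$ counts the faces of $\Delta_{3k+1}$ of size $j+1$ that lie in some size-$(k+1)$ face of $\Delta_{3k+1}$ but in no facet of $\Delta_{3k+1}$ of size larger than $k+1$. Parts (1) and (2) of Lemma \ref{EEEEE} therefore deliver
\[
f_{k-1} - f'_{k-1} = (k+1)^2, \qquad f_{k-2} - f'_{k-2} = \frac{k(k+1)}{2},
\]
and part (3) of Lemma \ref{EEEEE} gives $f_{k-3} - f'_{k-3} = 0$.

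What still needs to be checked is $f_j - f'_j = 0$ for every $j \leq k-3$. This follows from a short subface-extension argument: if $F \in \Delta_{3k+1}(k)$ has $|F| \leq k-3$, then by definition $F$ extends to some $(k+1)$-face $G \in \Delta_{3k+1}$; choosing any $H$ with $F \subseteq H \subseteq G$ and $|H| = k-2$, Lemma \ref{EEEEE}(3) places $H$ inside a facet $H'$ of $\Delta_{3k+1}$ with $|H'| > k+1$, and then $F \subseteq H'$ certifies $F \in \Delta_{3k+1}(k)'$. This is the only mildly non-bookkeeping step in the proof, and I do not anticipate any genuine obstacle beyond it.

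Feeding these values into formula (\ref{fhfh}) with $d = k+1$, every term $(-1)^m(m+1)(f_{k-1-m} - f'_{k-1-m})$ with $m \geq 2$ vanishes, leaving
\[
h_k(k) \;=\; (k+1)^2 - k(k+1) \;=\; k+1.
\]
The conclusion $\beta_{k, k+2k}(J(P_{3k+1})) = k+1$ then follows from \cite[Theorem 2.1]{Nagoya}, exactly as in Corollaries \ref{BBBBB} and \ref{DDDDD}.
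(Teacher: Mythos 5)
Your proposal is correct and follows essentially the same route as the paper: extract $f_{k-1}-f'_{k-1}=(k+1)^2$ and $f_{k-2}-f'_{k-2}=k(k+1)/2$ from Lemma \ref{EEEEE}, kill the lower terms, and feed the differences into formula (\ref{fhfh}) to get $h_k = (k+1)^2 - k(k+1) = k+1$. The only difference is that you spell out the subface-extension argument showing $f_j = f'_j$ for $j \leq k-3$, which the paper leaves implicit in its appeal to Lemma \ref{EEEEE}.
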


\begin{proof}
Let $f(\Delta_{3k+1}(k))=(f_0,f_1,\ldots,f_{k}), f(\Delta_{3k+1}(k-1)')=(f'_0,f'_1,\ldots,f'_{k})$ be the $f$-vectors of $\Delta_{3k+1}(k), \Delta_{3k+1}(k)'$.  It follows from Lemma \ref{EEEEE} that $f_i = f'_i$ for $0\leq i < k-3$ and that 
\[
f_{k-1} - f'_{k-1} = (k+1)^2, \, \, \, \, \, f_{k-2} - f'_{k-2} = k(k+1)/2.
\]
Hence by the formula (\ref{fhfh}) one has $h_{k}(k-1) = (k+1)^2 - 2(k(k+1)/2) = k+1$, in other words, $\beta_{k, k+2k-1}(J(P_{3k+1})) = k+1$, as desired. 
\end{proof}

Summarizing the above discussions yields the following

\begin{Theorem}
\label{path}
Let $P_n$ denote the path on $[n]$ and $J(P_n)$ its vertex cover ideal.  Let $i = \projdim{J(P_n)}$ and $j = \reg{J(P_n)}$.  Then
\begin{itemize}
    \item[(i)]  If $n=3k$, then $i=k, j=2k$ and $\beta_{k,k+2k}(J_{P_{3k}}) = 1$; 
    \item[(ii)] If $n=3k-1$, then $i=k, j=2k-1$ and $\beta_{k,k+(2k-1)}(J_{P_{3k-1}}) = 1$; 
    \item[(iii)] If $n=3k+1$, then $i=k, j=2k$ and $\beta_{k,k+2k}(J_{P_{3k+1}}) = k+1$.   
\end{itemize}
\end{Theorem}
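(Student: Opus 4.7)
The plan is to assemble the three cases already treated in Corollaries \ref{BBBBB}, \ref{DDDDD}, and \ref{FFFFF}. The only ingredient not already spelled out there is the identification of $i = \projdim J(P_n)$ and $j = \reg J(P_n)$ with the numbers claimed in the statement; once this is confirmed, the three corollaries deliver the three Betti numbers directly.

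For this identification, I would invoke Terai's duality applied to the Alexander dual pair $I(P_n) = I_{\Delta_n}$ and $J(P_n) = I_{\Delta_n^\vee}$, where $\Delta_n = \Delta(P_n^c)$. This yields the two equalities
\[
\projdim J(P_n) \;=\; \reg I(P_n) - 1, \qquad \reg J(P_n) \;=\; \projdim I(P_n) + 1.
\]
Substituting the formulas for $\reg I(P_n)$ and $\projdim I(P_n)$ recalled from \cite[Theorem 1.2]{HeVT} and \cite[Corollary 5.4]{BHO} at the beginning of this section, a short case check for $n$ modulo $3$ gives $i = k$ in all three cases, together with $j = 2k$ in cases (i) and (iii), and $j = 2k - 1$ in case (ii), in perfect agreement with the theorem statement.

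With $i$ and $j$ pinned down, part (i) is exactly Corollary \ref{BBBBB}, part (ii) is Corollary \ref{DDDDD}, and part (iii) is Corollary \ref{FFFFF}. No substantive obstacle arises in the theorem itself; the combinatorial heart of the argument has already been executed in Lemmas \ref{AAAAA}, \ref{CCCCC}, and \ref{EEEEE}, where the relevant facets and faces of $\Delta_n(k-1)$ and $\Delta_n(k-1)'$ (respectively $\Delta_n(k)$ and $\Delta_n(k)'$ in the last case) are enumerated, and the Betti numbers are extracted from those counts via the $f$-$h$-vector identities (\ref{fh}) and (\ref{fhfh}). The theorem is simply the consolidation of those computations.
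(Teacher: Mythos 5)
Your proposal is correct and follows essentially the same route as the paper: the paper's proof of Theorem \ref{path} is precisely the consolidation of Corollaries \ref{BBBBB}, \ref{DDDDD}, and \ref{FFFFF}, with the values of $i$ and $j$ read off from the recalled formulas for $\reg I(P_n)$ and $\projdim I(P_n)$ via the duality $\projdim J(P_n) = \reg I(P_n) - 1$ and $\reg J(P_n) = \projdim I(P_n) + 1$ (the paper's reference \cite[Proposition 8.1.10]{HH}, i.e.\ Terai's theorem, exactly as you invoke). Your case check modulo $3$ matches the values stated at the start of each case in the paper, so nothing further is needed.
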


On the other hand, a topological approach to the computation for $\beta_{k,3k}(J_{P_{3k}})$ and $\beta_{k,3k-1}(J_{P_{3k-1}})$ is possible.  Observe that in both of these cases, when $n = 3k$ or $n = 3k-1$, we are to compute the graded Betti number $\beta_{k,n}(J(P_n))$. It then follows from \cite[Corollary 8.1.4]{HH} that
\begin{align} 
\beta_{k,n}(J(P_n)) = \dim_K \widetilde{H}_{k-1}(\Delta_n;K). \label{eq.P3k}
\end{align}


\begin{Lemma} 
\label{lem.redHom}
For any integers $n$ and $i$, one has 
$$\widetilde{H}_i(\Delta_n) \simeq \widetilde{H}_{i-1}(\Delta_{n-3}).$$
\end{Lemma}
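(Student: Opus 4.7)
The plan is to identify $\Delta_n$ with the independence complex of the path $P_n$---since cliques of $P_n^c$ are precisely independent sets of $P_n$---and then run Mayer--Vietoris against the star--deletion decomposition at the vertex $2 \in [n]$.

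Concretely, I would write $\Delta_n = A \cup B$ with $A = \st_{\Delta_n}(\{2\})$ and $B = \del_{\Delta_n}(\{2\})$, so that $A \cap B = \lk_{\Delta_n}(\{2\})$. The core is then to verify three facts: first, that $A$ is contractible, being a cone with apex $\{2\}$; second, that the link $A \cap B$ is canonically identified with $\Delta_{n-3}$, because its faces are precisely the independent sets of $P_n$ disjoint from $N_{P_n}[2] = \{1,2,3\}$, i.e.\ (after relabeling) the independent sets of the subpath on $\{4, 5, \ldots, n\}$; and third, that $B$ is contractible.

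Assuming these three, the reduced Mayer--Vietoris sequence
\[
\cdots \to \widetilde{H}_i(A) \oplus \widetilde{H}_i(B) \to \widetilde{H}_i(\Delta_n) \to \widetilde{H}_{i-1}(A \cap B) \to \widetilde{H}_{i-1}(A) \oplus \widetilde{H}_{i-1}(B) \to \cdots
\]
collapses at both ends to yield the desired isomorphism $\widetilde{H}_i(\Delta_n) \cong \widetilde{H}_{i-1}(\Delta_{n-3})$.

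The only substantive step is the contractibility of $B$. For this I would observe that deleting vertex $2$ from $P_n$ splits the graph into the isolated vertex $\{1\}$ and the subpath on $\{3, 4, \ldots, n\}$. Because the independence complex of a disjoint union of graphs is the simplicial join of their individual independence complexes, $B$ equals the join of a single point (the independence complex of the isolated vertex) with a copy of $\Delta_{n-2}$; a join with a point is a cone and hence contractible. The main obstacle I anticipate is bookkeeping in the small-$n$ cases, where the subpath or $P_{n-3}$ degenerates and the reduced-homology conventions at degree $-1$ must be invoked; apart from this, the argument is a clean application of Mayer--Vietoris.
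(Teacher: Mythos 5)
Your proof is correct and follows essentially the same route as the paper: a Mayer--Vietoris argument on the star/deletion decomposition at a next-to-end vertex of the path, with both pieces contractible (the deletion being a cone over the isolated endpoint) and the link identified with $\Delta_{n-3}$. The only difference is that you decompose at the vertex $2$ while the paper uses $n-1$, which is immaterial by symmetry; if anything, you spell out the identification of the link with $\Delta_{n-3}$ and the contractibility of the deletion more explicitly than the paper does.
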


\begin{proof}
    Observe that $\del_{\Delta_n}(n-1)$ is a cone with vertex $n$. Thus, for any $i \ge 0$,
    $$\widetilde{H}_i(\del_{\Delta_n}(n-1)) = 0.$$ 
    Let $\Gamma = \text{st}_{\Delta_n}(n-1)$ be the closed star of $(n-1)$ in $\Delta_n$. Then, $\Gamma$ has no homology. Also, it is easy to see that $\Delta_n - \Gamma$ has the same homotopy type as that of $\del_{\Delta_n}(n-1)$. Clearly,
    $$(\Delta_n - \Gamma) \cap \Gamma = \link_{\Delta_n}(n-1).$$
    The desired equality then follows from a standard use of Mayer--Vietoris sequence, applied to $\Delta_n - \Gamma$ and $\Gamma$.
\end{proof}

\begin{Corollary} \label{cor.BettiNumber1}
    $\beta_{k,k+2k}(J(P_{3k})) = 1 = \beta_{k,k+2k-1}(J(P_{3k-1})).$
\end{Corollary}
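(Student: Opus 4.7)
The plan is to combine the Hochster-type formula in equation \eqref{eq.P3k} with a telescoping application of Lemma \ref{lem.redHom}, reducing the computation of the reduced homology to a base case that can be read off by inspection. This approach bypasses the $f$-vector bookkeeping of Corollaries \ref{BBBBB} and \ref{DDDDD} and treats both cases $n = 3k$ and $n = 3k-1$ uniformly.

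First, since in both cases we are computing $\beta_{k,n}(J(P_n))$ in the top total degree $n$, equation \eqref{eq.P3k} immediately gives
$$\beta_{k,n}(J(P_n)) = \dim_K \widetilde{H}_{k-1}(\Delta_n;K).$$
Next, I would iterate Lemma \ref{lem.redHom} exactly $k-1$ times. Since each application simultaneously lowers the homological index by one and the vertex count by three, starting from $\widetilde{H}_{k-1}(\Delta_n)$ we arrive at $\widetilde{H}_0(\Delta_{n-3(k-1)})$. Substituting the two values of $n$ yields
$$\widetilde{H}_{k-1}(\Delta_{3k}) \cong \widetilde{H}_0(\Delta_3), \qquad \widetilde{H}_{k-1}(\Delta_{3k-1}) \cong \widetilde{H}_0(\Delta_2).$$

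Finally, I would identify the two base complexes explicitly. Since $P_2^c$ is the empty graph on $\{1,2\}$, the complex $\Delta_2$ consists of the two isolated vertices $\{1\}$ and $\{2\}$; since $P_3^c$ consists of the single edge $\{1,3\}$, the complex $\Delta_3$ has facets $\{1,3\}$ and $\{2\}$. In both cases $\Delta_n$ is disconnected with exactly two components, so $\widetilde{H}_0(\Delta_n;K) \cong K$. Combining, $\beta_{k,k+2k}(J(P_{3k})) = 1 = \beta_{k,k+(2k-1)}(J(P_{3k-1}))$.

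The only real obstacle is minor index bookkeeping: one must confirm that each of the $k-1$ invocations of Lemma \ref{lem.redHom} is legitimate, i.e.\ that the iteration terminates at the base values $n \in \{2,3\}$ rather than continuing into vertex counts where $\Delta_n$ is empty or where the star/link construction degenerates. Once the iteration count is verified, the argument is a one-line telescoping, and the conceptual payoff is that the topological viewpoint reduces the combinatorial enumeration of $f$-vectors in the previous subsections to a homology computation of a trivial two-point complex.
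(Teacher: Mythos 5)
Your proposal is correct and follows essentially the same route as the paper: applying equation \eqref{eq.P3k} and then iterating Lemma \ref{lem.redHom} down to $\widetilde{H}_0(\Delta_3)$ and $\widetilde{H}_0(\Delta_2)$, each of which is one-dimensional since the corresponding complex has exactly two connected components. Your explicit identification of the base complexes and the remark on verifying the iteration count are just slightly more detailed versions of what the paper leaves implicit.
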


\begin{proof}
    It follows from (\ref{eq.P3k}) and Lemma \ref{lem.redHom} that
    $$\beta_{k,k+2k}(J(P_{3k})) = \dim_K \widetilde{H}_{k-1}(\Delta_{3k}) = \dim_K \widetilde{H}_0(\Delta_3) = 1,$$
    and
    $$\beta_{k,k+2k-1}(J(P_{3k-1})) = \dim_K \widetilde{H}_{k-1}(\Delta_{3k-1}) = \dim_K \widetilde{H}_0(\Delta_2) = 1,$$
    as desired.
\end{proof}

\section{Betti numbers of vertex cover ideals of cycles}
Let $C_n$ denote the cycle on $[n]$ with $E(C_n) = \{\{1,2\},\{2,3\},\ldots,\{n-1,n\}, \{1,n\}\}$.  The present section is devoted to computing the graded Betti number $\beta_{i,i+j}(J(C_n))$ of $J(C_n)$, where $i = \projdim J(C_n)$ and $j = \reg J(C_n)$.  For the remaining of this paper, without any possibly confusion with the notation used in the last section, let $\Delta_n$ denote the clique complex $\Delta(C_n^c)$ of the complementary graph $C_n^c$ of $C_n$.  It follows from \cite[Corollary 8.1.4]{HH} that
\begin{eqnarray}
\label{betti}
\beta_{i,i+j}(J(C_n)) = \sum_{F \in \Delta_n, \, |F| = n - (i+j)} \dim_K \widetilde{H}_{i-1}(\link_{\Delta_n}F;K).
\end{eqnarray}

It is known from \cite[Corollary 5.5]{AF} that 
$$\reg I(C_n) = \left\{ \begin{array}{ll} \left\lfloor \dfrac{n}{3}\right\rfloor + 1 & \text{if } n \equiv 0, 1 (\text{mod } 3) \\
& \\
\left\lfloor \dfrac{n}{3} \right\rfloor + 2 & \text{if } n \equiv 2 (\text{mod } 3),\end{array}\right.$$
and
$$\projdim I(C_n) = \left\{ \begin{array}{ll} 2\left\lfloor \dfrac{n}{3}\right\rfloor - 1 & \text{if } n \equiv 0 (\text{mod } 3) \\
& \\
2\left\lfloor \dfrac{n}{3} \right\rfloor  & \text{if } n \equiv 1,2 (\text{mod } 3).\end{array}\right.$$
Thus by using \cite[Proposition 8.1.10]{HH}, one has 
\[
\projdim J(C_{3k}) = k, \, \, \projdim J(C_{3k+1}) =  k, \, \, \, \projdim J(C_{3k+2}) = k+1,
\]
and
\[
\reg J(C_{3k}) = 2k, \, \, \reg J(C_{3k+1}) = 2k+1, \, \, \, \reg J(C_{3k+2}) = 2k+1. 
\]
Our target is to compute the graded Betti numbers
\[
\beta_{k,3k}(J(C_{3k})), \, \,  \beta_{k,3k+1}(J(C_{3k+1})), \, \, \beta_{k+1,3k+2}(J(C_{3k+2})).
\]
It follows from (\ref{betti}) that
\[
\beta_{k,3k}(J(C_{3k})) = \dim_K \widetilde{H}_{k-1}(\Delta_{3k};K),
\]
\[
\beta_{k,3k+1}(J(C_{3k+1})) = \dim_K \widetilde{H}_{k-1}(\Delta_{3k+1};K),
\]
\[
\beta_{k+1,3k+2}(J(C_{3k+2})) = \dim_K \widetilde{H}_{k}(\Delta_{3k+2};K).
\]
By virtue of the formula on simplicial homology (as in Lemma \ref{lem.redHom})  
\[
\widetilde{H}_{q}(\Delta_n;K) \iso \widetilde{H}_{q-1}(\Delta_{n-3};K),
\]
it follows that
\[
\beta_{k,3k}(J(C_{3k})) = \dim_K \widetilde{H}_{k-1}(\Delta_{3k};K) = \dim_K \widetilde{H}_{0}(\Delta_{3};K) = 2,
\]
\[
\beta_{k,3k+1}(J(C_{3k+1})) = \dim_K \widetilde{H}_{k-1}(\Delta_{3k+1};K) = \dim_K \widetilde{H}_{0}(\Delta_{4};K) = 1,
\]
and
\[
\beta_{k+1,3k+2}(J(C_{3k+2})) = \dim_K \widetilde{H}_{k}(\Delta_{3k+2};K) = \dim_K \widetilde{H}_{1}(\Delta_{5};K) = 1.
\]

Again, summarizing this discussion results in the following theorem.

\begin{Theorem}
\label{cycle}
Let $C_n$ denote the cycle on $[n]$ and $J(C_n)$ its vertex cover ideal.  Let $i = \projdim{J(C_n)}$ and $j = \reg{J(C_n)}$.  Then
\begin{itemize}
    \item[(i)]  If $n=3k$, then $i=k, j=2k$ and $\beta_{k,k+2k}(J(C_{3k})) = 2$; 
    \item[(ii)] If $n=3k+1$, then $i=k, j=2k+1$ and $\beta_{k,k+(2k+1)}(J(C_{3k-1})) = 1$; 
    \item[(iii)] If $n=3k+2$, then $i=k+1, j=2k+1$ and $\beta_{k+1,(k+1)+(2k+1)}(J(C_{3k+1})) = 1$.   
\end{itemize}
\end{Theorem}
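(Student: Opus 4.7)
The plan is to use the Hochster-type formula (\ref{betti}) combined with a Mayer--Vietoris computation modeled on Lemma \ref{lem.redHom}. As a first step, combining the regularity and projective dimension of $I(C_n)$ quoted from \cite[Corollary 5.5]{AF} with \cite[Proposition 8.1.10]{HH}, one verifies $i+j=n$ in each of the three cases. Hence the only face $F \in \Delta_n = \Delta(C_n^c)$ with $|F| = n-(i+j) = 0$ is $F = \emptyset$, whose link is all of $\Delta_n$, and (\ref{betti}) collapses to
\[
\beta_{i,n}(J(C_n)) = \dim_K \widetilde{H}_{i-1}(\Delta_n; K).
\]

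Next I would compute $\widetilde{H}_*(\Delta_n; K)$ via the reduced Mayer--Vietoris sequence for the decomposition $\Delta_n = \st_{\Delta_n}(n) \cup \del_{\Delta_n}(n)$, whose intersection is $\link_{\Delta_n}(n)$. The closed star is a cone on $n$, hence acyclic. A short combinatorial check identifies $\link_{\Delta_n}(n) = \Delta(P_{n-3}^c)$: the non-neighbours of $n$ in $C_n$ are $\{2,\ldots,n-2\}$ and span a path. Likewise $\del_{\Delta_n}(n) = \Delta(P_{n-1}^c)$, since $C_n \setminus \{n\}$ is a path on $n-1$ vertices. The sequence then reads
\[
\cdots \to \widetilde{H}_i(\Delta(P_{n-3}^c); K) \to \widetilde{H}_i(\Delta(P_{n-1}^c); K) \to \widetilde{H}_i(\Delta_n; K) \to \widetilde{H}_{i-1}(\Delta(P_{n-3}^c); K) \to \cdots.
\]
Iterating Lemma \ref{lem.redHom} down to the base cases $\Delta(P_m^c)$ for $m \in \{1,2,3\}$ shows that $\widetilde{H}_*(\Delta(P_m^c); K)$ is supported in a single degree and is one-dimensional or zero according to $m \bmod 3$. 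Plugging these values into the long exact sequence handles the three residue classes of $n$ separately: when $n = 3k$ the sequence degenerates to $0 \to K \to \widetilde{H}_{k-1}(\Delta_n; K) \to K \to 0$, forcing dimension two; when $n = 3k+1$ the link has trivial reduced homology, so $\widetilde{H}_{k-1}(\Delta_n; K) \cong \widetilde{H}_{k-1}(\Delta(P_{3k}^c); K) = K$; and when $n = 3k+2$ the deletion has trivial reduced homology, so the connecting homomorphism gives $\widetilde{H}_k(\Delta_n; K) \cong \widetilde{H}_{k-1}(\Delta(P_{3k-1}^c); K) = K$. Substituting into the formula of the previous paragraph yields the Betti numbers $2$, $1$, $1$ claimed in the theorem.

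The main obstacle is that the recursion $\widetilde{H}_q(\Delta_n; K) \cong \widetilde{H}_{q-1}(\Delta_{n-3}; K)$ asserted in the excerpt by analogy with Lemma \ref{lem.redHom} is not directly visible: unlike the path case, no single vertex of $\Delta(C_n^c)$ has a cone deletion inside $\Delta_n$. My plan above circumvents this by passing through the path complexes, at the cost of more intricate indexing bookkeeping in the long exact sequence. Equivalently, one could establish the cycle-level recursion directly and reduce each case to the base cases $\Delta_3$ (three isolated vertices, so $\widetilde{H}_0 = K^2$), $\Delta_4$ (the two disjoint edges $\{1,3\}$ and $\{2,4\}$, so $\widetilde{H}_0 = K$), and $\Delta_5$ (a pentagon, since $C_5^c \cong C_5$ is triangle-free, so $\widetilde{H}_1 = K$); these values match those produced by the Mayer--Vietoris computation and serve as an independent consistency check.
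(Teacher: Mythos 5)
Your proposal is correct, and it arrives at the values $2,1,1$ by a route that is genuinely more careful than the paper's at one key point. Both arguments begin identically: the numerology $i+j=n$ collapses the Hochster formula (\ref{betti}) to $\beta_{i,n}(J(C_n))=\dim_K\widetilde{H}_{i-1}(\Delta_n;K)$. The paper then simply invokes the recursion $\widetilde{H}_{q}(\Delta_n;K)\iso\widetilde{H}_{q-1}(\Delta_{n-3};K)$ for the \emph{cycle} complexes ``as in Lemma \ref{lem.redHom}'' and evaluates the base cases $\Delta_3,\Delta_4,\Delta_5$ (your computations of these --- three points, two disjoint edges, a pentagon --- agree with the paper's values $2,1,1$). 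As you rightly observe, the proof of Lemma \ref{lem.redHom} does not transfer verbatim to cycles, because for $\Delta(C_n^c)$ the deletion of a vertex is $\Delta(P_{n-1}^c)$, which is a cone for no residue class other than $n-1\equiv 1 \pmod 3$; the paper leaves this step implicit. Your Mayer--Vietoris argument with $A=\del_{\Delta_n}(n)\simeq\Delta(P_{n-1}^c)$, $B=\st_{\Delta_n}(n)$ acyclic, and $A\cap B=\link_{\Delta_n}(n)=\Delta(P_{n-3}^c)$, combined with the path-complex homology already established by Lemma \ref{lem.redHom} and Corollary \ref{cor.BettiNumber1}, supplies exactly the missing justification: in the three residue classes the long exact sequence gives respectively a short exact sequence $0\to K\to\widetilde{H}_{k-1}(\Delta_{3k})\to K\to 0$, an isomorphism with $\widetilde{H}_{k-1}(\Delta(P_{3k}^c))$, and an isomorphism (via the connecting map) with $\widetilde{H}_{k-1}(\Delta(P_{3k-1}^c))$. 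What the paper's phrasing buys is brevity and a uniform-looking three-step recursion; what your version buys is a complete proof whose only inputs are the path lemma and the acyclicity of a closed star, at the cost of the case analysis in the long exact sequence. The two are consistent, and your independent check against $\Delta_3,\Delta_4,\Delta_5$ confirms it.
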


{}

\end{document}